\newtheorem{theorem}{Theorem}
\newtheorem{lemma}[theorem]{Lemma}
\newtheorem{cor}[theorem]{Corollary}
\newtheorem{obs}[theorem]{Observation}
\newtheoremstyle{case}{}{}{}{}{}{:}{ }{}
\theoremstyle{case}
\def\R{\mathbb{R}}
\newcommand{\conv}{\mathsf{conv}}
\def\ie{i.e.\ }
\def\etal{{\it et~al.}\,}
\begin{document}

\title{New Lower Bounds for Tverberg Partitions with Tolerance in the Plane}

\author{Sergey Bereg\thanks{Department of Computer Science,
        University of Texas at Dallas. This research is supported in part by NSF award CCF-1718994. }
        \and
        Mohammadreza Haghpanah$^*$}
\maketitle

\begin{abstract}
Let $P$ be a set $n$ points in a $d$-dimensional space. 
Tverberg's theorem says that, if $n$ is at least $(k-1)(d+1)+1$, 
then $P$ can be partitioned into $k$ sets whose convex hulls intersect. 
Partitions with this property are called {\em Tverberg partitions}. 
A partition has tolerance $t$ if the partition remains a Tverberg partition after removal of any set of $t$ points from $P$.
Tolerant Tverberg partitions exist in any dimension provided that $n$ is sufficiently large.
Let $N(d,k,t)$ be the smallest value of $n$ such that tolerant Tverberg partitions exist for any set of $n$ points in $\R^d$. 
Only few exact values of $N(d,k,t)$ are known. 

In this paper we establish a new tight bound for $N(2,2,2)$. 
We also prove many new lower bounds on $N(2,k,t)$ for $k\ge 2$ and $t\ge 1$. 
\end{abstract}

\section{Introduction}

The classical Tverberg's theorem~\cite{t-grt-66} states that a
sufficiently large set of points in $\R^d$ 
can be partitioned into subsets such that their convex hulls intersect. 
For the history and recent advances around Tverberg's theorem, we refer the reader to 
\cite{tverberg50,barany2018tverberg,de2019discrete,matouvsek2002lectures}.

\begin{theorem}[Tverberg~\cite{t-grt-66}]
Any set $P$ of at least $(k-1)(d+1)+1$ points in $\R^d$ can be partitioned into 
$k$ sets $P_1,P_2,\dots,P_k$ whose convex hulls intersect, i.e.~\begin{equation}
\label{eqtverberg}
\bigcap _{i=1}^k \conv(P_i) \ne \emptyset. 
\end{equation}
\end{theorem}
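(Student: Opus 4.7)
My plan is to follow Sarkaria's tensor-product argument, which reduces Tverberg's theorem to the colorful Carath\'eodory theorem. Set $N=(k-1)(d+1)+1$ and write $P=\{p_1,\dots,p_N\}$. First I would lift each point to $\bar p_i=(p_i,1)\in\R^{d+1}$, then fix vectors $v_1,\dots,v_k\in\R^{k-1}$ with $v_1+\dots+v_k=0$ and every $k-1$ of them linearly independent (e.g.\ the vertices of a regular simplex centered at the origin). For each $i$, form the color class $C_i=\{\bar p_i\otimes v_j:1\le j\le k\}\subset\R^{(d+1)(k-1)}$; the origin lies in $\conv(C_i)$ for every $i$ because $\tfrac1k\sum_j v_j=0$.

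The ambient dimension is $(d+1)(k-1)=N-1$ and there are $N$ such color classes, so colorful Carath\'eodory supplies indices $\sigma(i)\in\{1,\dots,k\}$ and non-negative weights $\lambda_i$ summing to $1$ with
\[
\sum_{i=1}^N \lambda_i\,\bar p_i\otimes v_{\sigma(i)}=0.
\]
Setting $I_j=\sigma^{-1}(j)$ and $w_j=\sum_{i\in I_j}\lambda_i\bar p_i$, this rewrites as $\sum_j w_j\otimes v_j=0$. Since the only linear relation among the $v_j$ is $v_1+\dots+v_k=0$, all the $w_j$ must coincide; call the common value $w=(w',c)\in\R^{d+1}$. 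Reading off the last coordinate forces $\sum_{i\in I_j}\lambda_i=c=1/k$ for every $j$, so each $I_j$ is non-empty, and the first $d$ coordinates exhibit $kw'$ as a convex combination of $\{p_i:i\in I_j\}$ simultaneously for all $j$, giving the desired Tverberg partition $P=\bigsqcup_j P_{I_j}$ with $kw'\in\bigcap_j \conv(P_{I_j})$.

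The only non-elementary ingredient is the colorful Carath\'eodory theorem; once it is invoked the rest is essentially unpacking a tensor identity, so that appeal is where the real work lies. The dimension count $(d+1)(k-1)=N-1$ is what makes the whole setup click, and I expect the main obstacle, when writing out the details, to be confirming cleanly that the relation space of the $v_j$ is one-dimensional so that the $w_j$ are forced to agree. An alternative route would be the B\'ar\'any--Shlosman--Sz\H{u}cs topological proof via deleted joins and the connectivity of chessboard complexes, but it is substantially heavier and in its classical form handles only $k$ a prime power.
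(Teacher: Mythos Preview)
Your argument is a correct rendition of Sarkaria's tensor-product proof of Tverberg's theorem: the lifting to $\R^{d+1}$, the choice of $v_1,\dots,v_k\in\R^{k-1}$ with one-dimensional relation space, the dimension count $(d+1)(k-1)=N-1$ matching the number of color classes needed for colorful Carath\'eodory, and the unpacking of $\sum_j w_j\otimes v_j=0$ to force all $w_j$ equal are all sound. The point you flag as the main obstacle---that the relation space of the $v_j$ is one-dimensional---is handled by your hypothesis that any $k-1$ of them are linearly independent, so the only dependency is a scalar multiple of $(1,\dots,1)$.

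However, note that the paper does not actually prove this theorem. It is stated in the introduction purely as classical background, with a citation to Tverberg's 1966 paper, and no proof is given or claimed. So there is no ``paper's own proof'' to compare against: you have supplied a complete (and standard) argument where the paper simply quotes the result.
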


In 1972, David Larman proved the first tolerant version of Tverberg's theorem: 
any set of $2d+3$ points in $\R^d$ can be partitioned into two sets $A$ and $B$ such that 
their convex hulls intersect with tolerance one, i.e.,~for 
any point $x\in \R^d$, $\conv(A\setminus\{x\})\cap \conv(B\setminus\{x\})\ne \emptyset$. 
Garc{\'{\i}}a{-}Col{\'{\i}}n~\cite{garcia07} generalized it for any tolerance.
Sober{\'{o}}n and Strausz~\cite{soberon12} generalized it further
for partitions into many sets.
The general problem can be stated as follows. 

{\bf Problem} (Tverberg partitions with tolerance). 
Given positive integers $d,k,t$, find the smallest positive integer $N(d,k,t)$ such that 
any set $P$ of $N(d,k,t)$ points in $\R^d$ can be 
partitioned into $k$ subsets $P_1, P_2,\dots, P_k$ such that for any set $Y\subset P$ of at most $t$ points
\begin{equation} \label{eqttverberg}
\bigcap _{i=1}^k \conv(P_i\setminus Y) \ne \emptyset.
\end{equation}

We call a partition of $P$ into $k$ subsets $P_1, P_2,\dots, P_k$ {\em $t$-tolerant} 
if condition (\ref{eqttverberg}) holds for any set $Y$ of at most $t$ points. 
Several upper bounds for $N(d,k,t)$ are
known~\cite{garcia17,larman72,ms-attp-14,soberon12}. 
Some lower bounds for $N(d,k,t)$ are
known~\cite{garcia15,alfonsin01,soberon15}. 

\begin{theorem}[Ram{\'{\i}}rez-Alfons{\'{\i}}n~\cite{alfonsin01}]
For any $d\ge 4$
\begin{equation}
  N(d,2,1) \ge \left\lceil \frac{5d}{3} \right\rceil + 3. 
\end{equation}
\end{theorem}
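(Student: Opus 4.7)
By the definition of $N(d,2,1)$, it suffices to exhibit, for each $d\ge 4$, a set $P\subset\R^d$ of size $n=\lceil 5d/3\rceil+2$ that admits no $1$-tolerant Tverberg $2$-partition; this yields $N(d,2,1)\ge n+1=\lceil 5d/3\rceil+3$. The fact that the target bound grows by five whenever $d$ grows by three strongly suggests a block construction: carve $\R^d$ into roughly $q\approx d/3$ affine-generic $3$-dimensional flats $F_1,\dots,F_q$, place a cluster $C_i$ of five generic points in each $F_i$ (for concreteness, the four vertices of a tetrahedron together with its centroid), and add a short generic residue (of size $2$, $4$, or $6$ depending on $d\bmod 3$) to reach $n$ exactly.

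The driving combinatorial observation is a local \emph{fragility}: five generic points in $\R^3$ have a unique Radon dependency, and hence a unique Tverberg $2$-split; deleting any one of them leaves four affinely independent points, which admit no Tverberg $2$-partition at all. Thus a single cluster $C_i$ already fails to be $1$-tolerant when viewed inside $F_i$. The affine-generic placement of the $F_i$ is then used to lift this local failure to a global one: every affine dependency on $P$ should decompose blockwise (each block's coefficients summing to zero on their own), so any intersection witness $y\in\conv(A)\cap\conv(B)$ that uses $C_i$ with positive total weight must induce an in-block intersection $\conv(A\cap C_i)\cap\conv(B\cap C_i)\ne\emptyset$ in $F_i$.

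Given this setup, the argument proceeds by contradiction. Suppose $P=A\sqcup B$ is $1$-tolerant and delete a point $x\in C_i$ (say $x\in A$). The required new witness $y'\in\conv(A\setminus\{x\})\cap\conv(B)$ either still carries positive weight on $C_i$ — forcing $\conv((A\cap C_i)\setminus\{x\})\cap\conv(B\cap C_i)\ne\emptyset$ in $\R^3$, contradicting the local fragility — or it drops the block entirely. The latter possibility is ruled out by arranging each block to be \emph{essential}: the residual configuration $P\setminus C_i$ does not admit a Tverberg $2$-partition compatible with the induced split $(A\setminus C_i)\sqcup(B\setminus C_i)$. A careful accounting of block and residue contributions then forces $|A|+|B|\ge\lceil 5d/3\rceil+3$, contradicting $n=\lceil 5d/3\rceil+2$.

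The \emph{main obstacle} is establishing the essentiality of every block, since this couples the placement of the $F_i$, the genericity of the residue points, and the exact count $\lceil 5d/3\rceil+2$ in a way that leaves very little slack. Arranging $q$ affine-generic $3$-flats already demands $d\ge 4q-1$, which pins down the allowable $q$ and hence the size of the residue as a function of $d\bmod 3$; the three residue classes $d\equiv 0,1,2\pmod 3$ therefore each require a slightly different ``rescue''-avoidance argument. Once essentiality is certified in all three cases, the per-block fragility lemma delivers the contradiction immediately.
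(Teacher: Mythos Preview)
First, note that the paper does not supply a proof of this theorem: it is quoted from Ram\'{\i}rez-Alfons\'{\i}n as background, so there is no ``paper's own proof'' to compare against. For context, the original argument in \cite{alfonsin01} goes through oriented matroids and the projective-equivalence viewpoint (a point set admits no $1$-tolerant Radon partition precisely when it is projectively equivalent to the vertex set of a convex polytope); the required configurations of $\lceil 5d/3\rceil+2$ points are produced via Lawrence oriented matroids, a route bearing no resemblance to your block construction.

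Your proposal, however, has a genuine gap: the two numerical constraints you impose are incompatible. To reach $5q\approx 5d/3$ block points you need $q\approx d/3$ clusters, yet you (correctly) note that $q$ affinely independent $3$-flats require $d\ge 4q-1$. Combining these gives $d\ge 4\lfloor d/3\rfloor-1$, which already fails for $d=6$ and for every $d\ge 12$; in general the largest admissible $q$ is about $(d+1)/4$, so the blocks account for at most roughly $5d/4$ points and the ``residue'' must have size of order $5d/12$, not the constant $2$, $4$, or $6$ you claim. If instead you try to salvage the count by placing the $3$-flats as linear subspaces through a common origin in direct sum (which needs only $d\ge 3q$), then linear dependencies decompose blockwise but affine ones do not: the single global constraint $\sum_j\lambda_j=0$ does not split per block, so a witness $y\in\conv(A)\cap\conv(B)$ need not project to an in-block Radon witness in $F_i$. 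Either way, the ``local fragility $\Rightarrow$ global obstruction'' step has no foundation at the scale required, and the argument does not go through.
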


\begin{theorem}[Garc{\'i}a-Col{\'i}n and Larman~\cite{garcia15}]
\begin{equation} \label{gl15}
N(d,2,t) \ge 2d+t+1. 
\end{equation}
\end{theorem}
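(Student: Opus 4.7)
The plan is to prove the bound by exhibiting, for each $d\ge 1$ and $t\ge 1$, a set $P\subset\R^d$ of $n=2d+t$ points that admits no $t$-tolerant partition into two parts. A natural witness is a point set in sufficiently general convex position---for example, the vertices of a cyclic polytope $P=\{\gamma(i):1\le i\le n\}$ with $\gamma(s)=(s,s^2,\dots,s^d)$. This configuration enjoys strong combinatorial properties: no $d+1$ points lie on a common hyperplane, every subset of size at most $d$ is affinely independent, and (crucially) no point of $P$ lies in the convex hull of the remaining $n-1$.

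Fix an arbitrary partition $P=A\sqcup B$ and, by relabelling, assume $|A|\le|B|$, so $|A|\le\lfloor n/2\rfloor=d+\lfloor t/2\rfloor$. The goal is to construct a removal set $Y\subseteq P$ with $|Y|\le t$ certifying $\conv(A\setminus Y)\cap\conv(B\setminus Y)=\emptyset$. The case analysis is by $|A|$. If $|A|\le t+1$, I would remove $|A|-1\le t$ points from $A$ so that $A\setminus Y$ is a single point $a$; by convex position $a$ lies outside $\conv(P\setminus\{a\})\supseteq\conv(B\setminus Y)$, so the intersection is empty. If $|A|>t+1$, I would first trim $A$ down to $|A'|=d$ by removing the extra $|A|-d\le\lfloor t/2\rfloor$ points (so $\conv(A')$ sits inside a hyperplane $H$ of dimension $\le d-1$), and then use the remaining budget of at least $\lceil t/2\rceil$ to delete the minority-side points of $B$ relative to a carefully chosen full hyperplane $H^\ast\supseteq H$, leaving $\conv(B\setminus Y)$ strictly on one side of $H^\ast$ and hence disjoint from $\conv(A')$.

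\textbf{Main obstacle.} The first branch ($|A|\le t+1$) resolves every partition as long as $t\ge 2d-2$, so the delicate case is $t<2d-2$, where some partitions have $|A|\in(t+1,\,d+\lfloor t/2\rfloor]$. Here, naively, the minority side of $B$ with respect to an arbitrary hyperplane through $H$ can contain as many as $\lfloor|B|/2\rfloor\approx(d+t/2)/2$ points, exceeding the remaining budget $\lceil t/2\rceil$ by roughly $(2d-t)/4$. Overcoming this must exploit the specific structure of the chosen configuration---in the cyclic-polytope case, via Gale's evenness condition or by choosing $A'\subset A$ so that most of $B$ accumulates on a single side of $\mathrm{aff}(A')$ along the moment curve. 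Establishing this separation lemma within the budget is the step I expect to be the hardest and forms the main technical content of the argument.
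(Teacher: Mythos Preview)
First, note that the paper does not prove this theorem; it is quoted from Garc\'ia-Col\'in and Larman~\cite{garcia15} as background, so there is no in-paper argument to compare against. That said, your proposed construction is not merely incomplete at the flagged step---it is wrong, and the obstacle you identified is fatal rather than technical.

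Take $d=3$, $t=1$, so $n=2d+t=7$, and let $P=\{\gamma(1),\dots,\gamma(7)\}$ on the moment curve $\gamma(s)=(s,s^2,s^3)$. For moment-curve points one has the standard separation criterion: $\conv(A')\cap\conv(B')=\emptyset$ if and only if the $\pm$ sign sequence along the curve has at most $d$ sign changes (a separating affine functional restricts to a polynomial of degree $\le d$ in the curve parameter, hence has at most $d$ real roots). Now consider the alternating partition $A=\{\gamma(2),\gamma(4),\gamma(6)\}$, $B=\{\gamma(1),\gamma(3),\gamma(5),\gamma(7)\}$. The sign sequence $-{+}{-}{+}{-}{+}{-}$ has $6$ sign changes, and deleting any single point leaves at least $4$ (deleting an interior point merges two adjacent runs and drops the count by exactly $2$; deleting an endpoint drops it by $1$). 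Since $4>d=3$, no removal set $Y$ with $|Y|\le 1$ separates the two parts, so this partition \emph{is} $1$-tolerant. Hence the cyclic polytope on $7$ vertices does not witness $N(3,2,1)\ge 8$, and the same alternating obstruction recurs for larger $d$ with small $t$.

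In your case analysis this is precisely the situation $|A|=d=t+2$: no trimming of $A$ is available, and the hyperplane $\mathrm{aff}(A)$ splits $B$ evenly (two points on each side), exceeding the remaining removal budget of $1$. There is no clever choice of $A'\subseteq A$ or of an extending hyperplane $H^\ast$ that rescues this---the partition really is tolerant for this configuration. The Garc\'ia-Col\'in--Larman witnesses are not point sets in convex position; their argument passes through Gale duality and the characterisation of point sets projectively equivalent to vertex sets of neighbourly polytopes (hence the title of~\cite{garcia15}). If you want to reconstruct a proof, that is the structure to pursue, not points in convex position.
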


\begin{theorem}[Sober{\'{o}}n~\cite{soberon15}]
\begin{equation} \label{s15}
N(d,k,t) \ge k(t+\lfloor d/2\rfloor +1). 
\end{equation}
\end{theorem}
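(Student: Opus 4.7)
The plan is to exhibit, for $n := k(t+\lfloor d/2\rfloor+1)-1$, a set of $n$ points in $\R^d$ that admits no $t$-tolerant Tverberg partition into $k$ parts; this gives $N(d,k,t) > n$, which is exactly (\ref{s15}).

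I would take $P$ to consist of $n$ points on the moment curve $\gamma(s)=(s,s^2,\ldots,s^d)$, so that $\conv(P)$ is a cyclic polytope $C_d(n)$. The single structural fact I would use is the standard neighborliness of $C_d(n)$: every subset $S\subseteq P$ with $|S|\le \lfloor d/2\rfloor$ is the vertex set of a face of $\conv(P)$, hence lies on a supporting hyperplane $H_S$ with $P\setminus S$ strictly contained in one of the two open half-spaces bounded by $H_S$.

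Now let $P=P_1\sqcup\cdots\sqcup P_k$ be an arbitrary $k$-partition. Since $|P|=n<k(t+\lfloor d/2\rfloor+1)$, pigeonhole gives some index $i$ with $|P_i|\le t+\lfloor d/2\rfloor$. If $|P_i|\le t$, I would set $Y=P_i$, yielding $\conv(P_i\setminus Y)=\emptyset$ and killing the intersection in (\ref{eqttverberg}) immediately. Otherwise $t<|P_i|\le t+\lfloor d/2\rfloor$; I would choose any $Y\subseteq P_i$ with $|Y|=t$, so that $S:=P_i\setminus Y$ satisfies $1\le|S|\le\lfloor d/2\rfloor$. By neighborliness, $S$ lies on a supporting hyperplane $H_S$ with every point of $P\setminus S$ strictly on one side. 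Since $Y\subseteq P_i$ and the parts are disjoint, $P_j\setminus Y=P_j\subseteq P\setminus S$ for each $j\ne i$, so $\conv(P_j\setminus Y)$ is strictly on one open side of $H_S$ while $\conv(S)\subseteq H_S$, forcing $\conv(S)\cap\conv(P_j\setminus Y)=\emptyset$ and hence (\ref{eqttverberg}) fails for this choice of $Y$.

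This completes the argument. The only nontrivial ingredient is the neighborliness of the cyclic polytope, which can be cited or derived in one line via a Vandermonde/sign-change computation showing that $\prod_{s_0\in S}(x-s_0)^2$, evaluated on the moment curve, extends to an affine functional vanishing on $S$ and positive on $P\setminus S$. No combinatorial case analysis beyond the single pigeonhole step is needed, so I expect no real obstacle in writing this up.
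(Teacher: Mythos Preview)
The paper does not supply its own proof of this statement: Theorem~4 is quoted as a result of Sober{\'o}n~\cite{soberon15} and is used only as a benchmark that Theorems~\ref{N2ktc} and~\ref{N2kt2} later improve in the planar case. So there is nothing in the paper to compare your argument against.

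That said, your argument is correct and is essentially the standard one. Placing the $n=k(t+\lfloor d/2\rfloor+1)-1$ points on the moment curve and invoking the $\lfloor d/2\rfloor$-neighborliness of the cyclic polytope is exactly the mechanism behind Sober{\'o}n's bound: after pigeonholing to a part $P_i$ with $|P_i|\le t+\lfloor d/2\rfloor$ and deleting any $t$ of its points, the surviving set $S$ with $|S|\le\lfloor d/2\rfloor$ spans a proper face, and your Vandermonde-style functional $\prod_{s_0\in S}(x-s_0)^2$ (degree $2|S|\le d$, hence affine on $\gamma$) cleanly certifies the separation from every other part. The only cosmetic point worth tightening is the edge case $|P_i|\le t$: as written you take $Y=P_i$, which is fine since $\conv(\emptyset)=\emptyset$, but you could equally fold this into the main case by noting that if $|S|=0$ the intersection in~(\ref{eqttverberg}) is trivially empty. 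No gaps.
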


In contrast to Tverberg's theorem, most bounds are not known to be tight.
The only known tight bounds are the following. 
Larman~\cite{larman72} proved $N(d,2,1)=2d+3$ for $d\le 3$. 
Forge, Las Vergnas and Schuchert~\cite{fls-10-01} proved $N(4,2,1)=11$.
For all $k\ge 2, t\ge 1$ and dimension one, 
$N(1,k,t)= k(t+2)-1$
by Mulzer and Stein~\cite{ms-attp-14}. 
In this paper we establish a new tight bound for $N(2,2,2)$. 

\begin{theorem} \label{t222}
$N(2,2,2) = 10$. 
\end{theorem}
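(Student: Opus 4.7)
The plan is to prove $N(2,2,2)\le 10$ and $N(2,2,2)\ge 10$ separately. The upper bound $N(2,2,2)\le 10$ follows from the general upper bounds on $N(d,k,t)$ cited in the introduction (such as those of Sober\'{o}n--Strausz, Garc\'{i}a-Col\'{i}n, and Mulzer--Stein), which specialize at $(d,k,t)=(2,2,2)$ to give exactly $10$. So the new content of the theorem is the matching lower bound $N(2,2,2)\ge 10$, requiring an explicit $9$-point set $P\subset\R^2$ admitting no $2$-tolerant partition into two parts.

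The first step is the easy observation that any $2$-tolerant partition $(A,B)$ must satisfy $|A|,|B|\ge 3$, since otherwise $Y$ equal to the smaller part empties one convex hull. So only the partition types $(3,6)$ and $(4,5)$ need to be refuted. I would place $P$ in strictly convex position; the $(3,6)$ case is then uniform, because for $A=\{a_1,a_2,a_3\}$ the choice $Y=\{a_1,a_2\}$ leaves the extreme vertex $a_3$ outside $\conv(B)\subseteq\conv(P\setminus\{a_3\})$. For the $(4,5)$ case, parametrize a $4$-subset $A=\{a_1,a_2,a_3,a_4\}$ (in cyclic order along $\conv(P)$) by its gap vector $(g_1,g_2,g_3,g_4)$ summing to $9$. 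Whenever some $g_i=1$, take $Y$ to be the two $A$-points opposite the length-$1$ gap; then $A\setminus Y$ consists of two $A$-points that are adjacent on $\conv(P)$, whose connecting segment is a boundary edge of $\conv(P)$ and hence strictly disjoint from $\conv(B)$. This kills every gap pattern except $(2,2,2,3)$ up to rotation.

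The main obstacle is the $(2,2,2,3)$ pattern, which on a regular $9$-gon turns out to be genuinely $2$-tolerant. Consequently $P$ must be asymmetric. I would choose $P$ to be a carefully designed convex $9$-point set---most naturally one with a three-fold rotational symmetry, collapsing the nine cyclic occurrences of the $(2,2,2,3)$ pattern into three equivalence classes---so that for each such occurrence, some witness $Y$ of size at most $2$ (typically one $A$-point at an endpoint of the length-$3$ gap together with one $B$-point inside it) leaves the remaining triangle of $A\setminus Y$ and quadrilateral of $B\setminus Y$ strictly separated by a line. The hardest step is balancing these nine coupled separation conditions inside a single configuration; I would either exploit the three-fold symmetry to reduce to three sub-cases verifiable by an elementary coordinate computation, or begin from the regular $9$-gon and argue by continuity that some small perturbation simultaneously breaks $2$-tolerance for all rotations of the $(2,2,2,3)$ pattern, then pin down explicit coordinates.
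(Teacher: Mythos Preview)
Your reduction is sound up through isolating the $(2,2,2,3)$ gap pattern, and you correctly observe that on the regular $9$-gon this pattern is $2$-tolerant. But the proposed fix---perturbing to an asymmetric convex $9$-point set---cannot succeed, for a reason already implicit in your own observation. Whether a given partition of $P$ is $t$-tolerant depends only on the order type of $P$, since intersection or disjointness of convex hulls of subsets is an order-type invariant; and nine points in convex position realize a single order type. Hence \emph{every} convex $9$-point set has a $2$-tolerant $(2,2,2,3)$ partition, and no perturbation, symmetry-breaking, or continuity argument restricted to convex position can furnish the lower bound. The paper confirms this computationally (Section~\ref{diss}, Table~\ref{table}): among the $3{,}702$ order types of nine points admitting no $2$-tolerant bipartition, none has more than six hull vertices.

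The paper's construction (Lemma~\ref{l222}) instead uses six points $p_1,\dots,p_6$ on the convex hull together with three interior points $q_1,q_2,q_3$, each placed just inside an edge $p_{2i-1}p_{2i}$. The interior points are the missing ingredient: they create extra separable pairs $\{q_i,p_{2i-1}\}$, $\{q_i,p_{2i}\}$ and separable triples such as $\{p_1,p_2,q_1\}$ that have no analogue in convex position, and the case analysis for $|A|=4$ exploits exactly these.
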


We also improve the bound (\ref{s15}) for the plane.

\begin{theorem} \label{N2ktc}
Let $c$ be a positive integer.
For any integers $k\ge 2c$ and $t\ge c$ 
\begin{equation} \label{E2ktc}
N(2,k,t)\ge k(t+2)+c. 
\end{equation}
\end{theorem}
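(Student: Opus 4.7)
My plan is to construct a specific set $P\subset\R^2$ of size $n := k(t+2)+c-1$ having no $t$-tolerant Tverberg partition into $k$ parts. I would take $P$ in strict convex position on a smooth convex curve---so every point is a vertex of $\conv(P)$ and no three are collinear---with carefully tuned spacing (in particular, with $c$ of the points placed as a tight cluster on a short sub-arc, as an extra structure beyond the Sober\'on baseline), and then derive a contradiction from the assumption that a $t$-tolerant partition $P_1,\dots,P_k$ exists.

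\paragraph{Size count and chord conditions.} A Sober\'on-type argument, identical in spirit to the proof of (\ref{s15}), first gives $|P_i|\ge t+2$ for every $i$: if $|P_i|\le t+1$, picking $p\in P_i$ and setting $Y := P_i\setminus\{p\}$ (of size $\le t$) forces the vertex $p$ of $\conv(P)$ into $\bigcap_{j\ne i}\conv(P_j)$, which is impossible. Writing $|P_i| = (t+2)+\varepsilon_i$ with $\varepsilon_i\ge 0$ and summing yields $\sum_i\varepsilon_i = c-1$; hence at most $c-1$ parts have $\varepsilon_i\ge 1$ and, using $k\ge 2c$, at least $c+1$ parts are \emph{small}, meaning $|P_i|=t+2$. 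For each small $P_i$ and every pair $\{p,p'\}\subseteq P_i$, applying $t$-tolerance to $Y := P_i\setminus\{p,p'\}$ (of size $t$) yields the \emph{chord condition} that the segment $[p,p']$ meets $Z_i := \bigcap_{j\ne i}\conv(P_j)$, and in convex position this forces every arc of $\conv(P)$ between two consecutive $P_i$-vertices to host a point of every other part $P_j$, $j\ne i$.

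\paragraph{Deriving a contradiction; main obstacle.} The heart of the proof is to convert the chord conditions on the $c+1$ small parts into a contradiction. A direct double-count shows that the combinatorial consequence of the chord condition (each arc hosts every other label) leaves a slack of $c^2-1$ potential extra labels, so for $c\ge 2$ the combinatorial count alone does not suffice, and the geometric content of the chord condition must be used. My approach is to exploit the cluster via pigeonhole: if some small part $P_i$ owned two cluster points, the chord condition on their (tiny) connecting segment would require each of the $k-1$ remaining parts to also own a cluster point, demanding $c-2\ge k-1$ and contradicting $k\ge 2c$. Hence each small part owns at most one cluster point and, with $c+1$ small parts but only $c$ cluster points, some small $P_{i^*}$ owns none; the whole cluster then lies in a single arc of $P_{i^*}$, and I would analyze the chord of $P_{i^*}$ bounding this arc, leveraging $t\ge c$ (which bounds how much the remaining arcs can absorb), to show $Z_{i^*}$ cannot reach it. The principal difficulty is in this last step: calibrating the positions of the non-cluster points so that the geometric argument actually goes through rather than being merely combinatorially tight is where most of the work lies.
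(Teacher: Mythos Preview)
Your construction---taking all $n=k(t+2)+c-1$ points in strict convex position---does not work, and this is a gap at the level of the construction itself, not merely in the final ``calibration'' step you flag. Already for $c=1$ and $k=2$ (so $n=2t+4$), \emph{every} set of $n$ points in convex position admits a $t$-tolerant bipartition, namely the alternating one $P_1=\{v_0,v_2,\dots\}$, $P_2=\{v_1,v_3,\dots\}$. Indeed, suppose that after deleting some $Y$ with $|Y|\le t$ the hulls $\conv(P_1\setminus Y)$ and $\conv(P_2\setminus Y)$ were separated by a line. That line cuts the cyclic sequence into two arcs, one containing all of $P_1\setminus Y$ and the other all of $P_2\setminus Y$. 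In an alternating sequence any arc carries at most one more even-indexed than odd-indexed point, so the arc holding $P_1\setminus Y$ has at least $|P_1\setminus Y|-1=(t+1-|Y\cap P_1|)$ odd-indexed points, all of which must lie in $Y\cap P_2$; this forces $|Y|\ge t+1$, a contradiction. Hence no choice of spacing on a convex curve can produce a witness for these parameters, and your last step is not a matter of tuning but is genuinely obstructed. (More generally, ``mod~$k$'' partitions of $k(t+2)$ points in convex position tend to survive for the same reason; a small cluster on the hull does not block them.)

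The paper sidesteps this by placing only $k(t+2)-1$ points on a regular polygon and putting the remaining $c$ points near the \emph{center}, not on the hull. Your counting up to the existence of a small part avoiding all $c$ special points is exactly right and is what the paper does; but with the special points in the interior the finishing step becomes a one-line pigeonhole rather than a delicate geometric argument. A small part $P_i$ contained in the polygon has $t+2$ vertices splitting the $k(t+2)-1$ polygon vertices into $t+2$ gaps, so some gap has at most $k-2$ vertices and therefore misses some other part $P_m$. Because the $c$ interior points are chosen within the distance from the origin to any chord spanning at most $k-1$ edges, every point of $P_m$ lies strictly on the origin side of that chord, so $\conv(P_m)$ misses the chord and Lemma~\ref{separ} (with $X$ the two chord endpoints) applies. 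The single idea you are missing is to pull the $c$ extra points off the hull and into the interior; with that change, your outline becomes the paper's proof.
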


{\bf Remark}. The bound of Theorem \ref{N2ktc} can be stated without parameter $c$ 
\begin{equation} \label{E2ktc1}
N(2,k,t)\ge k(t+2)+\min(t,\lfloor k/2\rfloor). 
\end{equation}

If $d=2$ and $k=2$ then Equation (\ref{E2ktc}) provides a lower bound $N(2,2,t)\ge 2(t+2)+1$. 
We further improve the bound for $k=2$ and $t\ge 3$.

\begin{theorem} \label{N2kt2}
For any $t \ge 3$, $N(2,2,t)\ge 2t+6$. 
\end{theorem}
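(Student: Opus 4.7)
The plan is to exhibit an explicit set $P \subseteq \R^2$ of $2t+5$ points admitting no $t$-tolerant Tverberg 2-partition. A natural candidate extends the construction underlying Theorem~\ref{N2ktc} (which realizes the $2t+5$ lower bound using $2t+4$ points) by adding one carefully placed extra point; one concrete realization would be two convex-position clusters of sizes $t+2$ and $t+3$ placed far apart on a common axis, with the specific placement of points on each arc tuned to rule out the balanced partitions.

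First I would establish a standard reduction: any $t$-tolerant partition $(A,B)$ must satisfy $|A|, |B| \ge t+2$. Indeed, if $|A| \le t+1$, take $Y = A \setminus \{a\}$ for any $a \in A$; then $|Y| \le t$ and $A\setminus Y = \{a\}$, so $t$-tolerance forces $a \in \conv(B)$, which fails in sufficiently general position. Combined with $|A|+|B| = 2t+5$, this gives $\{|A|, |B|\} = \{t+2,\,t+3\}$; assume $|A|=t+2$ and $|B|=t+3$.

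Next I would enumerate such partitions $(A,B)$ by the distribution of $A$ across the two structural components of $P$, writing $a_1 := |A \cap P_1|$ so that $a_2 = t+2-a_1$, $b_1 = t+2-a_1$, and $b_2 = a_1+1$. For each case I would exhibit a witness set $Y \subseteq P$ with $|Y| \le t$ whose removal separates $\conv(A\setminus Y)$ from $\conv(B\setminus Y)$; concretely, $Y$ is found by fixing a line $\ell$ and removing the $A$-points on one side together with the $B$-points on the other, so one needs a line with total ``wrong-side'' count at most $t$. For the boundary cases $a_1 \in \{0,1,t+1,t+2\}$, a line between the two clusters has wrong-side count $\min(2a_1+1,\, 2(t+2-a_1)) \le 3 \le t$ and is immediately within budget.

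The main obstacle is the balanced intermediate range $a_1 \in \{2,\dots,t\}$, where no between-cluster line succeeds. Here one instead uses a line passing through one cluster, cutting along its convex arc to separate $A\cap P_i$ from $B\cap P_i$, with the other cluster lying entirely on one side by global orientation. A careful choice of each cluster's convex arc (and of the extra point beyond the Theorem~\ref{N2ktc} construction) ensures that for every intermediate partition some through-cluster line has wrong-side count at most $t$, possibly after spending a few removals within the cluster to make an interleaved sub-partition contiguous. The hypothesis $t \ge 3$ enters precisely to provide enough removal budget to handle the worst such sub-partition, while for $t = 2$ the tight bound $N(2,2,2) = 10$ comes from Theorem~\ref{t222} by a separate argument.
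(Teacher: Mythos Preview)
Your proposal is a plan rather than a proof, and the decisive step is left as an unverified assertion. You write that ``a careful choice of each cluster's convex arc \dots\ ensures that for every intermediate partition some through-cluster line has wrong-side count at most $t$,'' but you neither specify the arcs nor carry out this verification, and that is exactly where the difficulty lies. Take $t=3$, clusters $C_1=\{p_1,\dots,p_5\}$ and $C_2=\{r_1,\dots,r_6\}$, and the fully alternating choice $A=\{p_1,p_3,r_1,r_3,r_5\}$ (so $a_1=2$). A line through $C_1$ with $C_2$ entirely on one side already costs $\min(a_2,b_2)=3$ removals in $C_2$, leaving nothing to untangle the interleaving $p_1,p_3$ versus $p_2,p_4,p_5$ inside $C_1$; the symmetric cut through $C_2$ is no better. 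Whether some specific bending of the arcs rescues this is precisely the content you would have to supply, and you have not. (Your opening sentence also conflates two different constructions: adding a point to the regular-polygon-plus-center configuration of Theorem~\ref{N2ktc} is not the same as two distant convex clusters.)

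The paper's argument uses a genuinely different configuration and avoids the balanced-interleaving obstacle. It places $2t+2$ points $p_1,\dots,p_{2t+2}$ in convex position and three interior points $q_1,q_2,q_3$, with $q_i$ tucked just inside the edge $p_{2i-1}p_{2i}$. After reducing to $|A|=t+2$ (the case $|A|=t+1$ uses $t\ge 3$ only to force $A\not\subset\{q_1,q_2,q_3\}$, so that $A$ contains a vertex of $\conv(P)$ and Lemma~\ref{l1}(1) applies), one splits on $m=|A\cap\{q_1,q_2,q_3\}|$. If $m=0$ then $A\subset P'$ with $|A|\ge |P'|/2$, so $A$ contains two consecutive $p_j$'s and Lemma~\ref{l1}(2) finishes. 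If $q_i\in A$ and one of $p_{2i-1},p_{2i}$ is also in $A$, then $X=\{q_i,p_{2i-1}\}$ or $\{q_i,p_{2i}\}$ works in Lemma~\ref{separ}. Otherwise each $q_i\in A$ forces the consecutive pair $p_{2i-1},p_{2i}$ into $B$, and a short interval-counting argument on the remaining points of $P'$ shows that $A$ must still contain two consecutive $p_j$'s. Every case is explicit; no unspecified tuning is required.
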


Several upper bounds for $N(d,k,t)$ are
known~\cite{garcia07,garcia17,soberon2018robust}. 
For example, Sober{\'o}n~\cite{soberon2018robust} proved
$N(d,k,t)=kt+O(\sqrt{t})$ for fixed $k$ and $d$.
Therefore, it is interesting to find lower and upper bounds for $P(d,k,t) = N(d,k,t)-kt$.  
Theorems \ref{t222},\ref{N2ktc}, and \ref{N2kt2} provide new 
lower bounds for $P(d,k,t)$ for $d=2$.

{\bf Remark}. Recently, we improved some lower bounds using computer pograms~\cite{bh-arpt-20}. 
For example, $N(2,2,t)\ge 2t+7$ for $5\le t\le 10$.

The rest of the paper is organized as follows. Section \ref{pre} introduces some observations and lemmas that will be used in our proofs. 
Theorems \ref{N2ktc} and \ref{N2kt2} are proven in Section \ref{thm2}.  
Theorem \ref{t222} is proven in Section \ref{thm222}.
Finally, we discuss the results and open problems in Section \ref{diss}.

\section{Preliminaries} \label{pre}

We start with a simple observation. 
\begin{obs} \label{sizet}
Every part in a $t$-tolerant partition has size at least $t+1$.
\end{obs}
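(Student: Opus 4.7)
The plan is to prove the contrapositive by direct contradiction. Suppose some part $P_j$ in the partition has $|P_j| \le t$. Then $P_j$ itself qualifies as an admissible removal set in the definition of $t$-tolerance: choose $Y = P_j$, which satisfies $|Y| \le t$ and $Y \subset P$.

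With this choice, $P_j \setminus Y = \emptyset$, so $\conv(P_j \setminus Y) = \emptyset$ under the standard convention that the convex hull of the empty set is empty. Consequently the global intersection $\bigcap_{i=1}^{k} \conv(P_i \setminus Y)$ contains the factor $\conv(P_j\setminus Y) = \emptyset$ and hence is empty. This directly contradicts the assumed $t$-tolerance condition (\ref{eqttverberg}). Therefore every part must have size at least $t+1$.

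There is essentially no obstacle here; the argument is a one-line pigeonhole-style contradiction. The only point worth stating explicitly is the convention $\conv(\emptyset)=\emptyset$, which is what makes removing an entire part destructive and forces the size bound. I would present the proof in one short paragraph right after the observation's statement, since no auxiliary machinery is required.
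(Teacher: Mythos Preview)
Your proof is correct. The paper states this as a simple observation without supplying any proof; your argument---taking $Y=P_j$ when $|P_j|\le t$ so that $\conv(P_j\setminus Y)=\emptyset$---is exactly the intended one-line justification.
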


The following lemma is simple, yet very useful in proving that a partition is not $t$-tolerant.  

\begin{lemma} \label{separ}
Let $P_1,P_2,\dots,P_k$ be a partition of a finite set $P$ in $\R^d$ and let $X$ be a subset of a set $P_i$ such that 
\begin{equation} \label{separ1}
\conv(X)\cap\conv(P\setminus X)=\emptyset
\end{equation}
and $|X|=m, |P_i|=t+m$. Then the partition of $P$ is not $t$-tolerant.
\end{lemma}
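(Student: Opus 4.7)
The plan is to exhibit a specific bad set $Y$ of $t$ points witnessing the failure of $t$-tolerance, and the natural choice is to remove precisely the $t$ points of $P_i$ that do not belong to $X$.

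First I would set $Y := P_i \setminus X$. Since $|P_i| = t+m$ and $|X| = m$, the set $Y$ has exactly $t$ points, which is admissible in the definition of $t$-tolerance. Note also that $Y \subseteq P_i$, so removing $Y$ does not touch any other part: $P_j \setminus Y = P_j$ for every $j \ne i$, while $P_i \setminus Y = X$.

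The remaining step is to show $\bigcap_{j=1}^k \conv(P_j \setminus Y) = \emptyset$, which would contradict (\ref{eqttverberg}) and thus establish that the partition is not $t$-tolerant. I would argue by inclusion: the intersection is contained in $\conv(X)$ (the $i$-th term), and it is also contained in $\bigcap_{j \ne i} \conv(P_j)$. The latter is a subset of $\conv\bigl(\bigcup_{j \ne i} P_j\bigr) = \conv(P \setminus P_i)$, and since $X \subseteq P_i$ we have $P \setminus P_i \subseteq P \setminus X$, hence $\bigcap_{j \ne i} \conv(P_j) \subseteq \conv(P \setminus X)$. Combining the two inclusions gives
\begin{equation*}
\bigcap_{j=1}^k \conv(P_j \setminus Y) \subseteq \conv(X) \cap \conv(P \setminus X),
\end{equation*}
which is empty by hypothesis (\ref{separ1}).

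There is no real obstacle here; the whole content of the lemma is the bookkeeping observation that deleting $P_i \setminus X$ collapses the $i$-th hull to $\conv(X)$ while leaving the other hulls untouched, so the separation assumption on $X$ and $P \setminus X$ immediately forces the intersection to be empty. The only point to be slightly careful about is the direction of the inclusion $\bigcap_{j \ne i}\conv(P_j) \subseteq \conv(P\setminus X)$, which relies solely on $X \subseteq P_i$.
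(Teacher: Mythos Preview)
Your proof is correct and follows exactly the same approach as the paper: set $Y = P_i \setminus X$, note $|Y|=t$, and observe that the $i$-th hull becomes $\conv(X)$ while every other $P_j$ is contained in $P\setminus X$, so the intersection is empty by hypothesis. Your write-up is slightly more detailed in justifying the inclusion $\bigcap_{j\ne i}\conv(P_j)\subseteq \conv(P\setminus X)$, but the idea is identical.
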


\begin{proof}
Let $Y=P_i\setminus X$. Then $|Y|=t$. Therefore 
\begin{equation}
\bigcap _{j=1}^k \conv(P_j\setminus Y) = \emptyset
\end{equation}
by Equation \ref{separ1} since one of the sets in the intersection is $X$ and every other set is a subset of $P\setminus X$.
\end{proof}

Two special cases of Lemma \ref{separ}.

\begin{lemma} \label{l1}
A partition $P_1,P_2,\dots,P_k$ of a finite set $P$ in $\R^d$ is not $t$-tolerant if one of the following conditions holds.
\begin{enumerate}
\item[{\rm (1)}]
A set $P_j$ contains a vertex of $\conv(P)$ and $|P_j|=t+1$.
\item[{\rm (2)}]
A set $P_j$ contains two points $p_i,p_i+1$ such that $p_i,p_i+1$ is an edge of $\conv(P)$ and $|P_j|=t+2$.
\end{enumerate}
\end{lemma}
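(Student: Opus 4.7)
The plan is to derive both items as immediate specializations of Lemma~\ref{separ}, choosing the subset $X\subseteq P_j$ to be the extremal structure mentioned in the hypothesis.

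For part~(1), I would take $X=\{v\}$, where $v$ is the vertex of $\conv(P)$ lying in $P_j$. Then $m=1$ and $|P_j|=t+1=t+m$, so the cardinality hypotheses of Lemma~\ref{separ} are satisfied. The required separation $\conv(X)\cap\conv(P\setminus X)=\emptyset$ is precisely the condition $v\notin\conv(P\setminus\{v\})$, which is the defining property of $v$ as an extreme point of $\conv(P)$. Applying Lemma~\ref{separ} finishes the case.

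For part~(2), I would take $X=\{p_i,p_{i+1}\}$, giving $m=2$ and $|P_j|=t+2=t+m$. To verify the separation I would invoke the fact that since $p_ip_{i+1}$ is an edge (a $1$-face) of $\conv(P)$, there is a supporting line $\ell$ such that $\conv(P)\cap\ell=[p_i,p_{i+1}]$. Under the standard convention that no other point of $P$ lies on this edge, every point of $P\setminus X$ lies strictly in one open half-plane bounded by $\ell$. Hence $\conv(P\setminus X)$ lies in that open half-plane, while $\conv(X)=[p_i,p_{i+1}]\subset\ell$, so the two hulls are disjoint. Lemma~\ref{separ} again yields that the partition is not $t$-tolerant.

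The proof is essentially a packaging exercise; the only real point of care is the convention used for "edge" in part~(2), namely that the relative interior of $[p_i,p_{i+1}]$ contains no other point of $P$. This is the natural reading in context, and without it the conclusion can already fail (e.g.\ a third collinear point in $P$ would sit in $\conv(X)\cap\conv(P\setminus X)$). So the main, and quite mild, obstacle is just flagging this convention; the convex-geometric content reduces to the two standard facts that vertices are extreme points and that faces admit supporting hyperplanes.
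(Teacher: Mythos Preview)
Your proposal is correct and follows exactly the same approach as the paper: both parts are obtained by applying Lemma~\ref{separ} with $X=\{v\}$ for a convex-hull vertex in part~(1) and $X=\{p_i,p_{i+1}\}$ for an edge in part~(2). You simply supply more detail than the paper does in verifying the separation hypothesis and in flagging the general-position convention for edges.
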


\begin{proof}
The first claim follows from Lemma \ref{separ} by setting $X=\{p_i\}$ where $p_i\in P_j$ is a vertex of $\conv(P)$.
The second claim follows from Lemma \ref{separ} by setting $X=\{p_i,p_{i+1}\}$.
\end{proof}

\section{Proofs of Theorems \ref{N2ktc} and \ref{N2kt2}}
\label{thm2}

First, we prove Theorem \ref{N2ktc}.

\begin{proof} [Proof of Theorem \ref{N2ktc}]
Construct a set $P$ of $k(t+2)+c-1$ points in the plane as follows. 
Let $Q$ be a regular $(k(t+2)-1)$-gon with center at the origin $O$. 
Place $k(t+2)-1$ points $q_1,\dots,q_{k(t+2)-1}$ at the vertices of $Q$ and $c$ points $p_1,\dots,p_c$ close to the origin.
More specifically, we assume that $|p_1|,\dots,|p_c| <d$ where $d$ is the distance from the origin to the line $p_1p_k$.
Let $V=\{q_1,\dots,q_{k(t+2)-1}\}$. 
We assume that the vertices of $Q$ are sorted in clockwise order, see Figure \ref{f2ktc}. 

\begin{figure}[htb]
\centering
\includegraphics{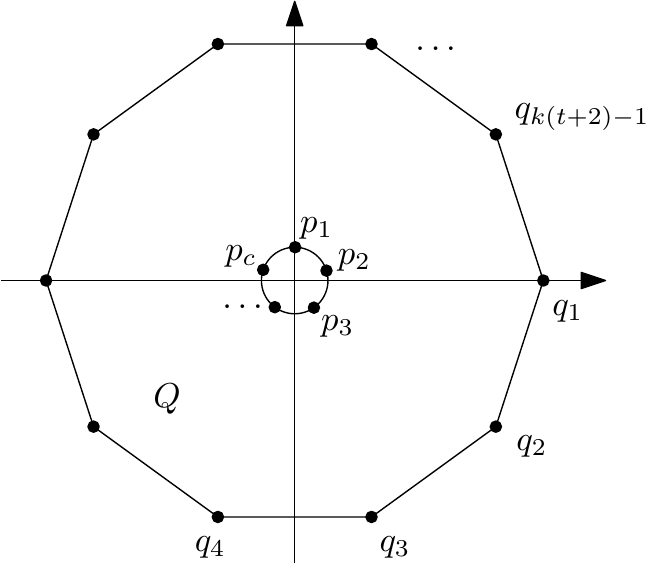}
\caption{A point set for Theorem \ref{N2ktc}.}
\label{f2ktc}
\end{figure}

We show that any partition of $P$ into $P_1,P_2,\dots,P_k$ is not $t$-tolerant. 
By Observation \ref{sizet}, we assume $|P_i| \geq t+1$ for all $i$. 
If $|P_i|=t+1$ for some $i$, then $P_i$ contains a point $q_j$ since $t+1>c$.
The partition is not $t$-tolerant by Lemma \ref{l1}(1). 

It remains to consider the case where all sets in the partition have size at least $t+2$.
At most $c-1$ sets in the partition may have size $\ge t+3$.
Since $k\ge 2c$ there exists a set $P_i$ of size $t+2$ such that $P_i\subset V$.
Since $|V|=k(t+2)-1$, there exist two points $q_a$ and $q_{a+j}$  with $j\le k-1$ such that the vertices between 
$q_a$ and $q_{a+j}$ are in $Q\setminus P_i$ (we assume here that $q_x=q_y$ if $x\equiv y\pmod{k(t+2)-1}$).
There is a set $P_m,m\ne i$ such that none of these vertices are in $P_m$.
Therefore the partition is not $t$-tolerant since $\conv\{q_a,q_b\}\cap \conv(P_m)=\emptyset$. 
Note that the distance from the origin to any point in $\{p_1,\dots,p_c\}$ is smaller than the distance from the origin to the line $q_aq_b$.
\end{proof}

Next we prove Theorem \ref{N2kt2}. 

\begin{figure}[htb]
\centering
\includegraphics{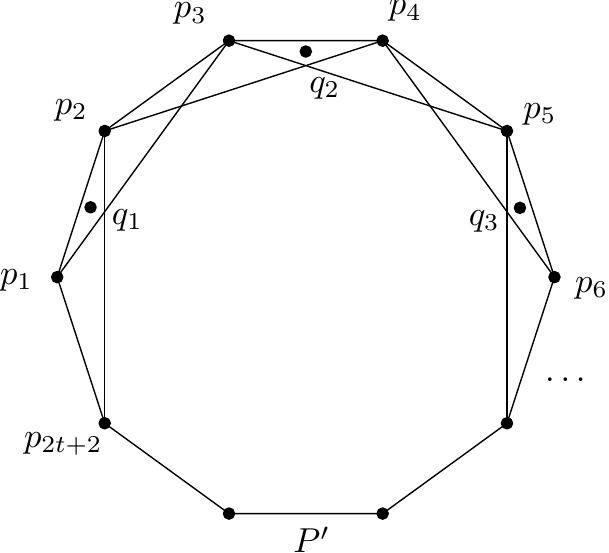}
\caption{A point set for Theorem \ref{N2kt2}.}
\label{f2kt2}
\end{figure}

\begin{proof} [Proof of Theorem \ref{N2kt2}]
Construct a set $P$ of $2t+5$ points in the plane as follows. 
First, place $2t+2$ points in convex position $P' = \{ p_1, p_2, \dots, p_{2t+2} \}$ where 
points are in clockwise order. Then place three points $Q=\{q_1, q_2, q_3\}$ inside 
the convex hull of $P'$ such that point $q_i,i=1,2,3$ is close to edge $p_{2i - 1}p_{2i}$ 
(it is required that $q_i$ is inside triangles $\bigtriangleup p_{2i-2}p_{2i-1}p_{2i}$ and $\bigtriangleup p_{2i-1}p_{2i}p_{2i+1}$),
see Figure \ref{f2kt2}.
We show that any partition of $P$ into $P_1,P_2$ is not $t$-tolerant. 

Without loss of generality, we assume $|P_1| \leq |P_2|$. 
Therefore, $|P_1| \leq t + 2$. 
By Observation \ref{sizet}, we assume $|P_1| \geq t+1$. 
Suppose if $|P_1| = t + 1$.
Since $t \geq 3$, $P_1$ contains a point of $P'$.
The partition is not $t$-tolerant by Lemma \ref{l1}(1). 

It remains to consider the case where $|P_1| = t + 2$. 
If set $P_1$ does not contain a point $q_i$, then $P_1\subset P'$ and $|P_1|\ge |P'|/2$.
Then $P_1$ contains two consecutive points of $P'$. 
The partition is not $t$-tolerant by Lemma \ref{l1}(2). 
Therefore $P_1\cap Q\ne \emptyset$.

Suppose $q_i \in P_1$ for some $i$. 
If $p_{2i-1} \in P_1$ or $p_{2i} \in P_1$ then the partition is not $t$-tolerant
using $X=\{ q_i, p_{2i-1} \}$ or $X=\{ q_i, p_{2i} \}$, respectively, and Lemma \ref{separ}.
Therefore, we assume that both points $p_{2i-1}$ and $p_{2i}$ are in $P_2$.

Let $m=|P_1 \cap \{ q_1, q_2, q_3 \}|$. Then $m\in\{1,2,3\}$. 
By the previous argument, set $P_2$ contains at least $m$ pairs of consecutive points of $P'$ 
(the pair $p_{2i-1},p_{2i}$ for each point $q_i\in P_1$). 
Note that $|P'\cap P_1|=t+2-m$ and $|P'\cap P_2|=t+m$. 
We will show that set $P_1$ contains a pair of consecutive points of $P'$.
Then, the partition is not $t$-tolerant by Lemma \ref{l1}(2). 

If $m=1$, then $|P'\cap P_1|=|P'\cap P_2|=t+1$ and set $P_2$ contains a pair of consecutive points of $P'$.
Then set $P_1$ contains a pair of consecutive points of $P'$.

Suppose $m=2$, say $q_i,q_j\in P_1$ where $i<j$. 
Then set $P_2$ contains $p_{2i-1}, p_{2i},  p_{2j-1}$, and $p_{2j}$.
There are two intervals $p_{2i+1},\dots,p_{2j-2}$ and $p_{2j+1},\dots,p_{2i-2}$ 
in $P'$ containing points of $P_1$. 
Note that each interval contains an even number of points.
If set $P_1$ does not contain a pair of consecutive points of $P'$, 
then set $P_1$ contains at most half of points in each interval. 
Then $|P'\cap P_1| <t$. This contradiction implies that 
set $P_1$ contains a pair of consecutive points of $P'$.

Finally, if $m=3$, then there exists only one interval $p_7,p_8,\dots,p_{2t+2}$ and set $P_1$ must contain a pair of consecutive points of $P'$; otherwise $|P'\cap P_1|<t+1$. 
The theorem follows.
\end{proof}

\section{Proof of Theorem \ref{t222}}
\label{thm222}

We found a special configuration of nine points to prove a lower bound for Theorem \ref{t222}.  
For this, we checked point configurations of all order types for $n=9$. 
Order types, introduced by Goodman and Pollack~\cite{gp-ms-83}, are useful for characterizing the
combinatorial properties of point configurations. 
An order type of a set of points $p_1,p_2,\dots,p_n$ in general position in the plane 
is defined using a mapping that assigns each triple of integers $i,j,k$ with 
$1\le i<j<k\le n$ the orientation (either clockwise or counter-clockwise) of the triple $p_i,p_j,p_k$. 
Two point sets $P_1$ and $P_2$ have the same {\em order type} if there is 
a bijection $\pi$ from $P_1$ to $P_2$ preserving this map, i.e., 
for any three distinct points $a,b,c$ in $P_1$, the orientation of $a,b,c$ 
and the orientation of $\pi(a),\pi(b),\pi(c)$ are the same. 
Aichholzer \etal~\cite{aak-eot-02} established that there are 158,817 order types for $n=9$.
We developed a program for testing each of these point sets whether it admits a 2-tolerant partition into two sets. 
The proof of the tolerance of a configuration found by the program turns out 
to be not simple even using the tools from Section \ref{pre}.

\begin{figure}[htb]
\centering
\includegraphics{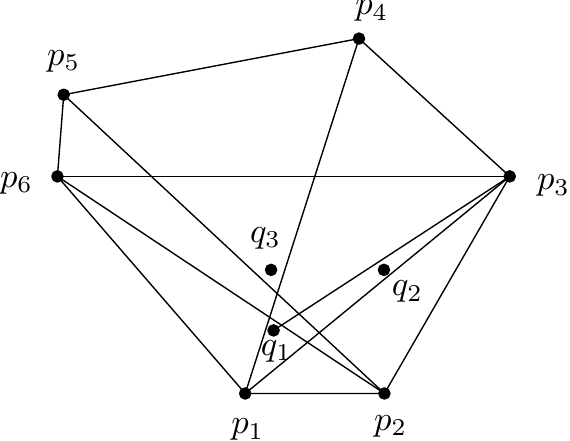}
\caption{A set $S$ of nine points for Lemma \ref{l222}.}
\label{nine}
\end{figure}

\begin{lemma} \label{l222}
There exists a set of nine points in the plane that does not admit a 2-tolerant partition into two sets. 
\end{lemma}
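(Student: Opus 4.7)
The plan is to fix the specific nine-point configuration $S$ depicted in Figure \ref{nine}, label its points, and then rule out every partition of $S$ into two parts as 2-tolerant. By Observation \ref{sizet}, each part must contain at least $t+1 = 3$ points, so the only size pairs to consider are $(3,6)$ and $(4,5)$. I would begin by recording explicitly which points of $S$ are vertices of $\conv(S)$ and which lie in its interior, so that Lemma \ref{l1} can be deployed as a first-pass filter on both cases.

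For the $(3,6)$ split, let $P_1$ denote the part of size $3 = t+1$. If $P_1$ contains any vertex of $\conv(S)$, Lemma \ref{l1}(1) certifies failure immediately; hence $P_1$ must consist entirely of interior points of $S$. Since there are only a few such points, this reduces the case to a short list of candidate triples, and for each such $P_1$ I would exhibit either a singleton or a pair inside one of the parts that is separated from its complement by a line and invoke Lemma \ref{separ}.

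For the $(4,5)$ split with $|P_1| = 4 = t+2$, Lemma \ref{l1}(2) handles any $P_1$ containing two consecutive vertices of $\conv(S)$. In the remaining cases I would enumerate, using the fixed order type of $S$, the possible distributions of hull vertices and interior points between $P_1$ and $P_2$ subject to this non-adjacency constraint; for each surviving distribution I would locate either a pair $X \subseteq P_1$ (witness of size $m=2$, part size $t+m=4$) or a triple $X \subseteq P_2$ (witness of size $m=3$, part size $t+m=5$) whose convex hull is separated by a line from the convex hull of $S \setminus X$, and then apply Lemma \ref{separ}.

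The principal obstacle is that straightforward applications of Lemma \ref{l1} do not exhaust the partitions: in a number of surviving $(4,5)$ cases neither part contains a hull vertex in the forbidden position, and one must appeal to the specific geometric arrangement of $S$ — concretely, to the orientations of particular triples prescribed by its order type — in order to identify the separating subset $X$ and justify the existence of a suitable line. This is precisely the feature that the authors' exhaustive computer search over all 158,817 order types is designed to detect, and it is what makes the proof nontrivial despite amounting to a bounded case analysis that the general-position tools of Section \ref{pre} alone cannot close.
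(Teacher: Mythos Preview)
Your skeleton matches the paper's: fix the nine-point set $S=P\cup Q$ with $P=\{p_1,\dots,p_6\}$ on the hull and $Q=\{q_1,q_2,q_3\}$ inside, split into the $(3,6)$ and $(4,5)$ cases, and dispatch many subcases with Lemma~\ref{l1}. The paper then does exactly the case analysis you describe, organized by $|A\cap P|\in\{1,2,3\}$ in the $(4,5)$ case.

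The genuine gap is your commitment to Lemma~\ref{separ} as the \emph{only} finishing tool. Lemma~\ref{separ} requires a witness $X$ contained in a single part with $\conv(X)\cap\conv(S\setminus X)=\emptyset$; the removal set is then $Y=P_i\setminus X$, lying entirely inside one part. The paper's proof does \emph{not} stay within this framework: in roughly half of the surviving $(4,5)$ subcases it instead names a removal set $C$ of size two directly and checks $\conv(A\setminus C)\cap\conv(B\setminus C)=\emptyset$, where $C$ draws one point from $A$ and one from $B$. For instance, when $A=\{p_1,p_3,p_5,q_2\}$ the paper takes $C=\{p_2,p_5\}$; when $A=\{p_2,p_4,p_6,q_1\}$ it takes $C=\{p_1,p_4\}$. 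Such straddling $C$'s cannot be produced by Lemma~\ref{separ}, and the condition they verify is strictly weaker than the separation condition $\conv(X)\cap\conv(S\setminus X)=\emptyset$ (which must hold against the \emph{full} complement, not just the other part after deletion). For the specific configuration in Figure~\ref{nine} it is not at all clear that a Lemma~\ref{separ} witness exists in these subcases, and your plan gives no argument that one does.

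The same issue already appears in the $(3,6)$ case: when $P_1=Q$, Lemma~\ref{separ} would force $m=1$ for $P_1$ or $m=4$ for $P_2$ (not ``a singleton or a pair'' as you write), and neither choice obviously yields a set separated from all of $S\setminus X$; the paper instead removes $C=\{p_1,p_2\}$ and checks only that $\conv(Q)$ misses $\conv(\{p_3,\dots,p_6\})$. So to carry out your program you must either broaden your toolkit to allow arbitrary two-point removal sets $C\subseteq S$ and argue directly from the definition of $2$-tolerance, or supply a separate argument that Lemma~\ref{separ} alone suffices for this particular order type --- which the paper does not claim and which would require additional geometric verification.
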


\begin{figure}[htb]
\centering
\includegraphics[]{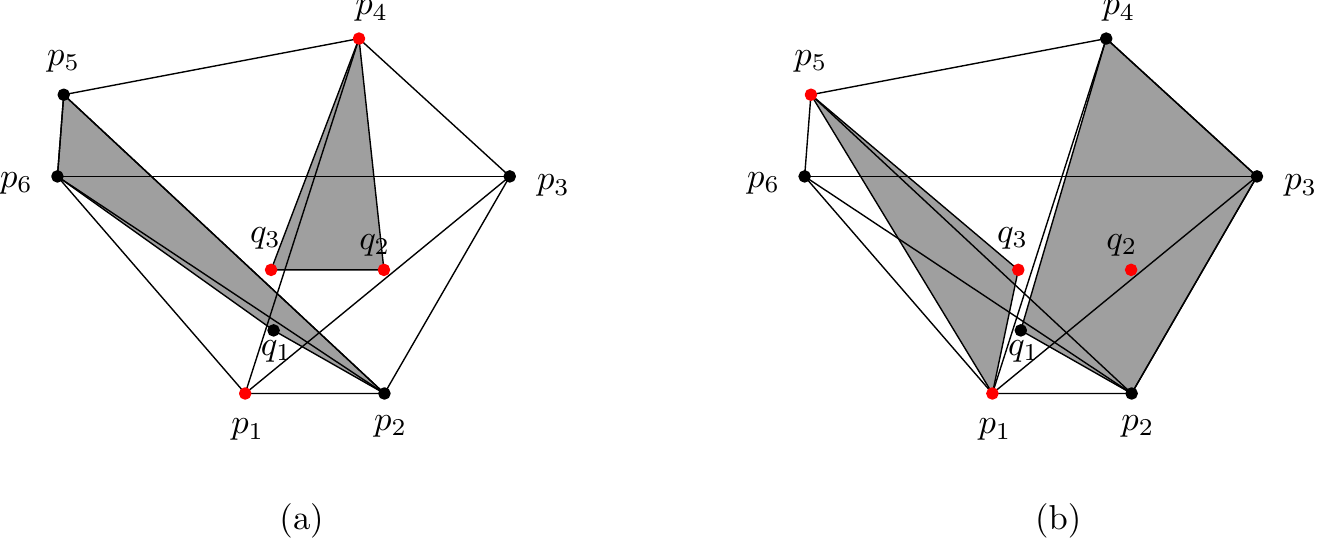}
\caption{Case 2a. The points of $A$ are red and the points of $B$ are black. 
The shaded polygons are the convex hulls of $A-C$ and $B-C$.}
\label{case2a}
\end{figure}

\begin{proof}
Consider a set $S$ of nine points shown in Fig.\ref{nine} 
where $S=P\cup Q, P=\{p_1,\dots,p_6\}$ and $Q=\{q_1,q_2,q_3\}$.
We will prove that any partition of $S=A\cup B$ is not 2-tolerant, \ie  
there exists a set $C=\{a,b\}\subseteq S$ such that 
\[ \conv(A\setminus C)\cap \conv(B\setminus C)=\emptyset. \]

We can assume that $|A|<|B|$. 
By Observation \ref{sizet}, we assume $|A| \geq 3$. 
Suppose that $|A|=3$. 
If $A=Q$, then take $C=\{p_1,p_2\}$.
If $A\ne Q$, then the partition of $S$ is not 2-tolerant by Lemma \ref{l1}(1).

It remains to analyze partitions with $|A|=4$. 
By Lemma \ref{l1}(2), we assume that $A$ does not contain two consecutive points $p_i,p_{i+1}$ (assuming $p_7=p_1$).
For example, this implies that $A$ contains at least one point of $Q$.
Consider the following cases depending on $|A\cap P|$.

{\bf Case 1}. Set $A$ contains exactly one point of $P$, say $p_i$. 
Then $Q\subset A$.
One of the sets $\{p_1,p_2,p_3\}$, $\{p_3,p_4,p_5\}$, $\{p_4,p_5,p_6\}$, say set $X$, 
does not contain $p_i$. The partition is not 2-tolerant by Lemma \ref{separ} using $X$.

\begin{figure}[htb]
\centering{\includegraphics[]{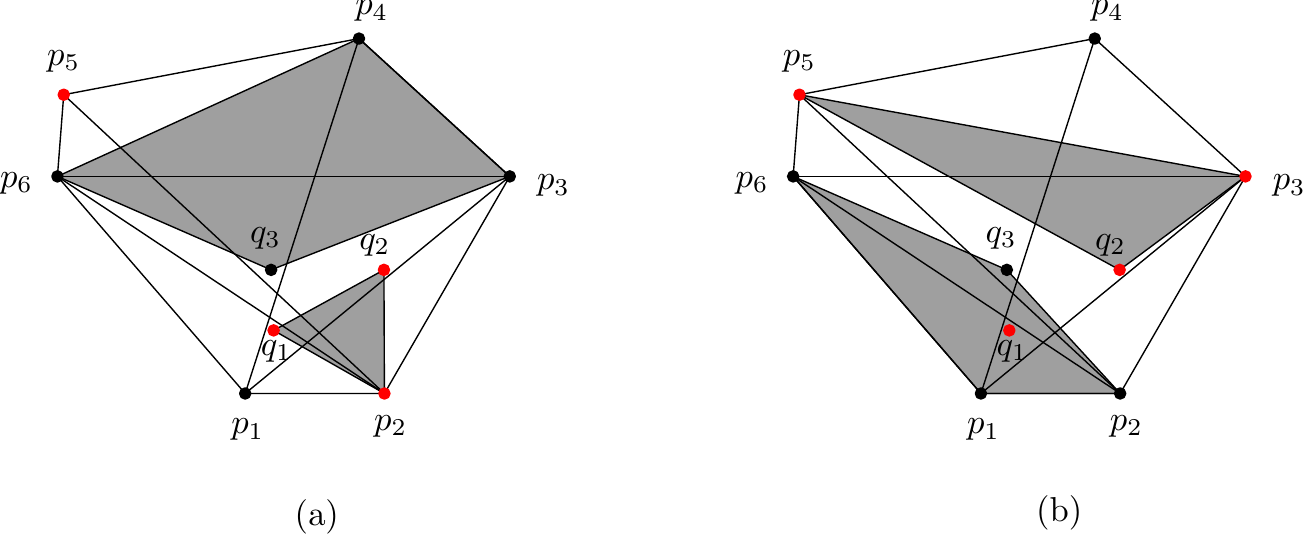}}
\caption{(a) Case 2b. (b) Case 2c.}
\label{case2bc}
\end{figure}

{\bf Case 2}. Set $A$ contains exactly two points of $P$, i.e. $A=\{p_i,p_j,q_c,q_d\}$ 
for some $1\le i<j\le 6$ and $1\le c<d\le 3$. 

{\bf Case 2a}. Suppose $i=1$. By our assumption $j\notin\{2,6\}$. 
We can assume that $j\ne 3$ using $X=\{p_4,p_5,p_6\}$ and Lemma \ref{separ}.
So, $j\in\{4,5\}$.

We also can assume that $q_1\in B$ using $X=\{p_1,q_1\}$ and Lemma \ref{separ}.
Therefore $\{q_2,q_3\}\subset A$.
If $j=4$ (so $A=\{p_1,p_4,q_2,q_3\}$) then take $C=\{p_1,p_3\}$, see Fig.\ref{case2a}(a). 
If $j=5$ (so $A=\{p_1,p_5,q_2,q_3\}$) then take $C=\{p_6,q_2\}$, see Fig.\ref{case2a}(b). 
\begin{figure}[htb]
\centering
\includegraphics{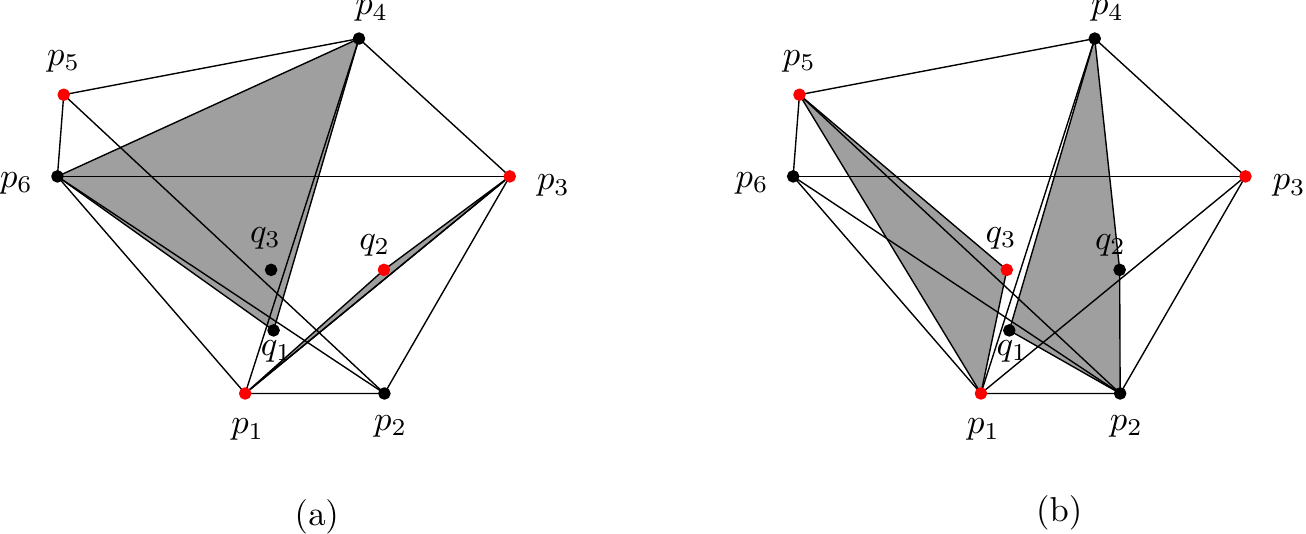}
\caption{Case 3a.}
\label{case3}
\end{figure}

{\bf Case 2b}. Suppose $i=2$. Then $j\ge 4$. 
We can assume $j\ne 4$ using $X=\{p_1,p_5,p_6\}$ and Lemma \ref{separ}.
We can assume $j\ne 6$ using $X=\{p_3,p_4,p_5\}$ and Lemma \ref{separ}.
Thus, $j=5$.
We can assume $q_1\in A$ using $X=\{p_1,p_6,q_1\}$ and Lemma \ref{separ}.

If $q_3\in A$ then $A=\{p_2,p_5,q_1,q_3\}$ we can use $X=\{p_3,p_4,q_2\}$ and Lemma \ref{separ}. 
If $q_2\in A$ then $A=\{p_2,p_5,q_1,q_2\}$ and we can use $C=\{p_1,p_5\}$, see Fig.\ref{case2bc}(a). 

{\bf Case 2c}. Suppose $i,j\in\{3,4,5,6\}$. 
We can assume $q_1,q_2\in A$ using $X=\{p_1,p_2,x\}$ for $x\in\{q_1,q_2\}$ and Lemma \ref{separ}.
Since $p_i$ and $p_{i+1}$ cannot be in $A$ together, there are three options.
If $p_3,p_5\in A$, take
$C=\{p_4,q_1\}$, see Fig.\ref{case2bc}(b).
If $p_4,p_6\in A$ then we can  use $X=\{p_1,p_2,p_3\}$ and Lemma \ref{separ}.
If $p_3,p_6\in A$ then we can  use $X=\{p_3,q_2\}$ and Lemma \ref{separ}. 

{\bf Case 3}. Set $A$ contains exactly three points of $P$, i.e. 
$\{p_1,p_3,p_5\}\subset A$ or $\{p_2,p_4,p_6\}\subset A$.

{\bf Case 3a}.
Suppose $\{p_1,p_3,p_5\}\subset A$.
If $q_1\in A$ then we can  use $X=\{p_1,q_1\}$ and Lemma \ref{separ}. 
If $q_2\in A$, take $C=\{p_2,p_5\}$, see Fig.\ref{case3}(a).
If $q_3\in A$, take $C=\{p_3,p_6\}$, see Fig.\ref{case3}(b).

{\bf Case 3b}.
Suppose $\{p_2,p_4,p_6\}\subset A$.
If $q_1\in A$, take $C=\{p_1,p_4\}$, see Fig.\ref{case3b}(a).
If $q_2\in A$, take $C=\{p_3,p_6\}$, see Fig.\ref{case3b}(b).
If $q_3\in A$, take $C=\{p_2,p_5\}$, see Fig.\ref{case3b}(c).
The lemma follows.
\end{proof}

\begin{figure}[htb]
\centering
\includegraphics[]{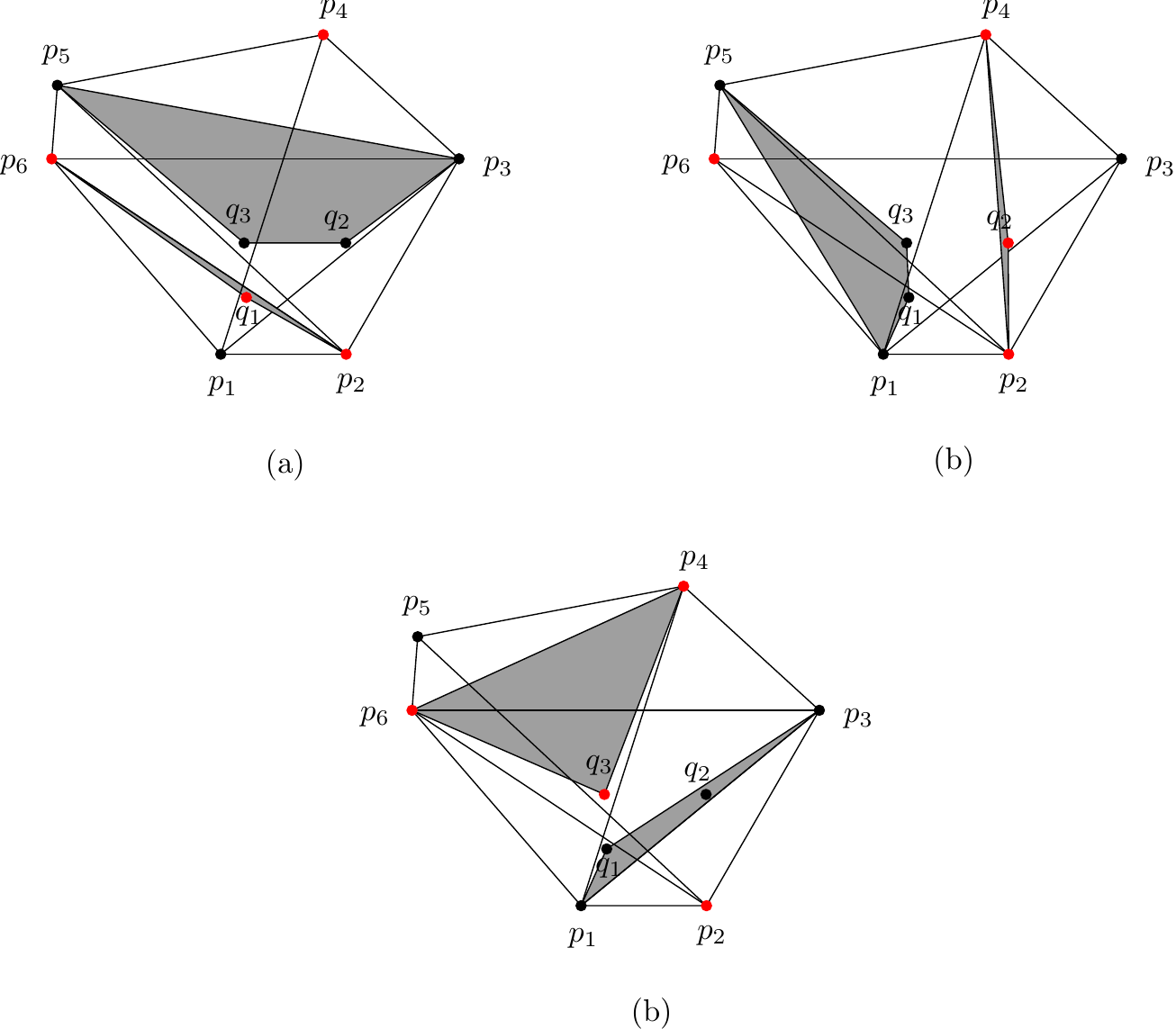}
\caption{Case 3b.}
\label{case3b}
\end{figure}

\begin{cor} \label{lbN222}
$N(2, 2, 2) \ge 10$.
\end{cor}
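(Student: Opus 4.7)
The plan is to derive Corollary \ref{lbN222} as an immediate consequence of Lemma \ref{l222}. By definition, $N(2,2,2)$ is the smallest positive integer $n$ such that every set of $n$ points in the plane admits a partition into two subsets satisfying the 2-tolerance condition (\ref{eqttverberg}). Equivalently, if $n < N(2,2,2)$, then there must exist at least one set of $n$ points in the plane that fails to admit a 2-tolerant partition into two parts.

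First I would invoke Lemma \ref{l222}, which exhibits the explicit 9-point configuration $S = P \cup Q$ shown in Figure \ref{nine} and proves that no partition $S = A \cup B$ is 2-tolerant. This directly witnesses that $n = 9$ is insufficient to force the existence of a 2-tolerant partition into two parts. Applying the contrapositive of the definition of $N(2,2,2)$, we conclude $N(2,2,2) > 9$, and since $N(2,2,2)$ is an integer, $N(2,2,2) \ge 10$.

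There is essentially no obstacle at this step, since all of the substantive combinatorial work has already been carried out in the proof of Lemma \ref{l222}: the hard part is constructing a 9-point set and verifying that every four-versus-five (or three-versus-six) split admits a two-point deletion separating the convex hulls, which the lemma has already accomplished through its case analysis. The corollary itself reduces to a one-sentence application of the definition of $N(d,k,t)$.
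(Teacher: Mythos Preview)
Your proposal is correct and matches the paper's approach exactly: the corollary is stated immediately after Lemma~\ref{l222} with no separate proof, as it is an immediate consequence of the definition of $N(d,k,t)$ together with the existence of the nine-point configuration. There is nothing to add.
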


Sober{\'{o}}n and Strausz~\cite{soberon12} proved an upper bound
$N(d,k,t) \leq (k-1)(t+1)(d+1)+1$ which implies
$N(2, 2, 2) \le 10$. 
Therefore $N(2, 2, 2) = 10$. 

\section{Further Discussion and Open Problems}
\label{diss}

Improving the bounds for Tverberg partitions with tolerance is an interesting problem.  
There are many triples $(d,k,t)$ with a gap between known lower and upper bounds.
Finding sharp bounds for $N(d,k,t)$ is a challenging problem. 
For example, Larman~\cite{larman72} conjectured that $N(d,2,1)=2d+3$ for any $d\ge 1$.
The conjecture is open for $d\ge 5$. 
For dimension $d=5$, the best known upper bound is
$N(5,2,1)\le 13$ by Larman~\cite{larman72} and the best known lower
bound is  $N(5,2,1)\ge 12$ by Garc{\'i}a-Col{\'i}n and Larman~\cite{garcia15}. 

Another possibility for a new sharp bound is for $d=2, k=2$ and tolerance $t=3$. 
The lower bound is $N(2,2,3)\ge 12$ by Theorem \ref{N2kt2}.
The best upper bound is $N(2,2,3)\le 13$ by Sober{\'{o}}n and
Strausz~\cite{soberon12}.
Based on our computer experiments, we conjecture that $N(2,2,3)= 12$. 
One way to improve the upper bound for $d=2, k=2$
and $t=3$ is to study the following core lemma in the
proof of the upper bound for $N(d,k,t)$~\cite{soberon12}.

\begin{lemma} \label{lem1}
Let $p\ge 1$ and $r\ge 0$ be integers, $S\subset\R^p$ be a set of 
$n=p(r+1)+1$ points $a_1,a_2,\dots,a_n$ and $G$ be a group with $|G|\le p$. 
If there is an action of $G$ in a set $S'\subset \R^p$ 
which is compatible with $S\subset S'$, then for each $a_i$ there is a $g_i\in G$ such that the set
$\{g_1 a_1 ,g_2 a_2,\dots,g_n a_n\}$ captures the origin with tolerance $r$.
\end{lemma}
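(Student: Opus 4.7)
\bigskip
\noindent\textbf{Proof proposal.} The plan is to proceed by induction on the tolerance parameter $r$, with a colorful Carath\'eodory-style base case and an equivariant topological inductive step. For the base case $r=0$, we have $n=p+1$ points in $\R^p$ and need to choose $g_i \in G$ so that $0 \in \conv\{g_1 a_1,\ldots,g_{p+1} a_{p+1}\}$. I would attempt to derive this from a B\'ar\'any-type colorful Carath\'eodory statement in which the ``color'' attached to $a_i$ is its $G$-orbit inside $S'$; the hypothesis $|G|\le p$ is what keeps the dimensional bookkeeping honest. A concrete tactic is to examine, for each assignment $(g_1,\ldots,g_{p+1}) \in G^{p+1}$, the signed volume of the simplex $\conv\{g_i a_i\}$ and use an averaging or sign-reversal argument built from the $G$-action to show that the origin cannot miss every such simplex simultaneously.

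For the inductive step from $r-1$ to $r$, observe that the point count grows by exactly $p$: $n = p(r+1)+1 = (pr+1)+p$. The natural first move is to apply the inductive hypothesis to a well-chosen subset of $pr+1$ points to obtain a subconfiguration that captures the origin with tolerance $r-1$, and then to use the $p$ remaining points to upgrade the tolerance by one. The obstacle is that a bad removal set $Y$ of size $r$ may straddle the two subsets, so a naive concatenation of inductively produced choices does not automatically yield the required tolerance-$r$ capture.

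To circumvent this difficulty, my preferred route is an equivariant test-map argument on the full parameter space $G^n$. One defines a $G$-equivariant map $F\colon G^n \to (\R^p)^{\binom{n}{r}}$ whose $J$-th coordinate records the signed distance from the origin to $\conv\{g_i a_i : i \notin J\}$, and shows that $F$ must vanish somewhere via an index or Borsuk--Ulam-type theorem for the $G$-action. The hypothesis $|G| \le p$ is crucial here: the index calculation needs the group to be no larger than the ambient dimension, mirroring the role $|G|\le p$ plays in classical equivariant Tverberg arguments. I expect the main obstacle to be verifying the nontriviality of this equivariant index --- equivalently, ruling out an equivariant map from $G^n$ to an appropriate sphere --- which is where the topological heart of the argument will lie.
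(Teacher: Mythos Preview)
The paper does not actually prove this lemma. It appears in Section~\ref{diss} (``Further Discussion and Open Problems'') purely as a quoted result from Sober\'on and Strausz~\cite{soberon12}, included only to motivate a question about the sharpness of the bound $n=p(r+1)+1$. There is therefore no proof in this paper against which to compare your proposal; any comparison would have to be with the original argument in~\cite{soberon12}.

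That said, your sketch has a concrete structural problem worth flagging. In the equivariant test-map step you set up a map $F\colon G^n \to (\R^p)^{\binom{n}{r}}$ and appeal to an index or Borsuk--Ulam-type obstruction. But $G$ is a finite group, so $G^n$ is a finite discrete set; it carries no useful equivariant topology, and no Borsuk--Ulam or index argument can force a zero of $F$ on a discrete domain. The standard remedy in this genre is to replace $G^n$ by a join $(E_G)^{*n}$ or a deleted-join construction so that the configuration space becomes a highly connected $G$-CW complex, and then to compare its connectivity with the dimension of the target representation sphere. Your outline does not indicate any such passage, and without it the ``topological heart'' you refer to is simply absent. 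A secondary issue is the base case: invoking a colourful Carath\'eodory statement requires each colour class (here the orbit $G\cdot a_i$) to contain the origin in its convex hull, and nothing in the hypotheses of the lemma --- in particular the undefined-in-this-paper notion of a ``compatible'' action --- visibly guarantees that. You would need to unpack what ``compatible'' means in~\cite{soberon12} before this step can be made honest.
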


For $d=2, k=2$ and tolerance 3, Lemma \ref{lem1} uses $n=13$ points since $p=(k-1)(d+1)=3$. 
Is this bound for $n$ sharp?

\subsection{Order types and tolerant Tverberg partitions}

Our proof of Theorem \ref{t222} uses the set of nine points shown in Figure \ref{nine}. 
It was computed by a program that we developed for testing the tolerance of a given point set. 
We have applied this program to 158,817 order types for $n=9$ which are provided by 
Aichholzer \etal~\cite{aak-eot-02}. 
Our program found that 155,115 point sets admit a 2-tolerant partition into two sets. 
Only 3,702 point sets do not admit a 2-tolerant partition into two sets and  therefore, 
can be used as examples for proving the lower bound $N(2,2,2)\ge 10$. 

We analyzed these 3,702 order types and found that their convex hulls have sizes between three and six. 
We count the number of different order types for each size of the convex hull, see Table \ref{table}. 
Figures \ref{ch5}, \ref{ch4}, and \ref{ch3} show some order types 
with 5, 4, and 3 points on the convex hull, respectively.
It follows from our computer experiments that any set of nine points in the plane 
with at least seven points on the convex hull admits a 2-tolerant partition into two sets. 
It is interesting to investigate how the size of convex hull of a point set affects lower bounds of $N(d,k,t)$. 

\begin{table}[htb]
\begin{center}
\begin{tabular}{|c|ccccccc|}
\hline
$h$  & $h=3$ & $h=4$ & $h=5$ & $h=6$ & $h=7$ & $h=8$ & $h=9$  \\ 
\hline
\hline
$N_h$  & 1303  & 1554 & 769 & 76 & 0 & 0 & 0  \\
\hline 
\end{tabular}
\end{center}
\caption{3,702 order types of nine points that do not admit a 2-tolerant partition into two sets. 
The point sets are partitioned using the size of the convex hull $h$. 
The size of each group is $N_h$.}
\label{table}
\end{table}

\begin{figure}[htb]
\begin{center}
\begin{tikzpicture}[scale=0.85]
 \begin{axis}[
  title={Order type 1874},
  xtick=\empty, 
  ytick=\empty, 
  scatter/classes={ 
   a={mark=square*,blue}, 
   b={mark=triangle*,red}, 
   c={mark=*,blue}
  }
 ]
  \addplot[scatter,only marks,
   scatter src=explicit symbolic]
   table[meta=label] { 
   x y label 
    9840 6320  c
    11088 53091  c
    13184 55184  c
    20272 31792  c
    23936 42832  c
    29536 27264  c
    30240 59216  c
    36608 40224  c
    65392 58624  c
   };
  \draw[red, thick] (axis cs: 9840, 6320) -- (axis cs: 11088, 53091);
  \draw[red, thick] (axis cs: 11088, 53091) -- (axis cs: 13184, 55184);
  \draw[red, thick] (axis cs: 13184, 55184) -- (axis cs: 30240, 59216);
  \draw[red, thick] (axis cs: 30240, 59216) -- (axis cs: 65392, 58624);
  \draw[red, thick] (axis cs: 65392, 58624) -- (axis cs: 9840, 6320);
 \end{axis}
\end{tikzpicture}
\qquad
\begin{tikzpicture}[scale=0.85]
 \begin{axis}[
  title={Order type 2163},
  xtick=\empty, 
  ytick=\empty, 
  scatter/classes={ 
   a={mark=square*,blue}, 
   b={mark=triangle*,red}, 
   c={mark=*,blue}
  }
 ]
  \addplot[scatter,only marks,
   scatter src=explicit symbolic]
   table[meta=label] { 
   x y label 
    1685 31127  c
    1974 28655  c
    2205 25935  c
    2325 27419  c
    2578 29850  c
    3081 29283  c
    3525 28573  c
    3780 29834  c
    4047 31515  c
   };
  \draw[red, thick] (axis cs: 1685, 31127) -- (axis cs: 4047, 31515);
  \draw[red, thick] (axis cs: 4047, 31515) -- (axis cs: 3780, 29834);
  \draw[red, thick] (axis cs: 3780, 29834) -- (axis cs: 3525, 28573);
  \draw[red, thick] (axis cs: 3525, 28573) -- (axis cs: 2205, 25935);
  \draw[red, thick] (axis cs: 2205, 25935) -- (axis cs: 1685, 31127);
 \end{axis}
\end{tikzpicture}
\end{center}
\caption{Two point sets of nine points each with five points on the convex hull. The point sets have different order types. Both point sets do not admit a 2-tolerant partition into two sets.}
\label{ch5}
\end{figure}

\begin{figure}[htb]
\begin{center}
\begin{tikzpicture}[scale=0.85]
 \begin{axis}[
  title={Order type 94078},
  xtick=\empty, 
  ytick=\empty, 
  scatter/classes={ 
   a={mark=square*,blue}, 
   b={mark=triangle*,red}, 
   c={mark=*,blue}
  }
 ]
  \addplot[scatter,only marks,
   scatter src=explicit symbolic]
   table[meta=label] { 
   x y label 
    5068 6077  c
    5286 4899  c
    5365 4103  c
    5423 3322  c
    6097 4725  c
    6518 5564  c
    6768 3948  c
    6907 5510  c
    7653 4077  c
   };
  \draw[red, thick] (axis cs: 5068, 6077) -- (axis cs: 6907, 5510);
  \draw[red, thick] (axis cs: 6907, 5510) -- (axis cs: 7653, 4077);
  \draw[red, thick] (axis cs: 7653, 4077) -- (axis cs: 5423, 3322);
  \draw[red, thick] (axis cs: 5423, 3322) -- (axis cs: 5068, 6077);
 \end{axis}
\end{tikzpicture}
\qquad
\begin{tikzpicture}[scale=0.85]
 \begin{axis}[
  title={Order type 8005},
  xtick=\empty, 
  ytick=\empty, 
  scatter/classes={ 
   a={mark=square*,blue}, 
   b={mark=triangle*,red}, 
   c={mark=*,blue}
  }
 ]
  \addplot[scatter,only marks,
   scatter src=explicit symbolic]
   table[meta=label] { 
   x y label 
    1191 62310  c
    13953 30288  c
    17130 22903  c
    23090 3225  c
    29211 31836  c
    34965 27444  c
    42777 42156  c
    63813 34475  c
    64344 33041  c
   };
  \draw[red, thick] (axis cs: 1191, 62310) -- (axis cs: 63813, 34475);
  \draw[red, thick] (axis cs: 63813, 34475) -- (axis cs: 64344, 33041);
  \draw[red, thick] (axis cs: 64344, 33041) -- (axis cs: 23090, 3225);
  \draw[red, thick] (axis cs: 23090, 3225) -- (axis cs: 1191, 62310);
 \end{axis}
\end{tikzpicture}
\end{center}
\caption{Two point sets of nine points each with four points on the convex hull. The point sets have different order types. Both point sets do not admit a 2-tolerant partition into two sets.}
\label{ch4}
\end{figure}

\begin{figure}
\begin{center}
\begin{tikzpicture}[scale=0.85]
 \begin{axis}[
  title={Order type 158483},
  xtick=\empty, 
  ytick=\empty, 
  scatter/classes={ 
   a={mark=square*,blue}, 
   b={mark=triangle*,red}, 
   c={mark=*,blue}
  }
 ]
  \addplot[scatter,only marks,
   scatter src=explicit symbolic]
   table[meta=label] { 
   x y label 
    3486 27997  c
    42864 41720  c
    43136 29192  c
    45456 37032  c
    53600 64168  c
    54176 22952  c
    54288 41032  c
    54352 9448  c
    63760 1368  c
   };
  \draw[red, thick] (axis cs: 3486, 27997) -- (axis cs: 53600, 64168);
  \draw[red, thick] (axis cs: 53600, 64168) -- (axis cs: 63760, 1368);
  \draw[red, thick] (axis cs: 63760, 1368) -- (axis cs: 3486, 27997);
 \end{axis}
\end{tikzpicture}
\qquad
\begin{tikzpicture}[scale=0.85]
 \begin{axis}[
  title={Order type 103224},
  xtick=\empty, 
  ytick=\empty, 
  scatter/classes={ 
   a={mark=square*,blue}, 
   b={mark=triangle*,red}, 
   c={mark=*,blue}
  }
 ]
  \addplot[scatter,only marks,
   scatter src=explicit symbolic]
   table[meta=label] { 
   x y label 
    14754 22968  c
    31582 34456  c
    41842 40140  c
    41882 11292  c
    42498 24480  c
    42930 17380  c
    43302 43540  c
    50782 5772  c
    51320 62470  c
   };
  \draw[red, thick] (axis cs: 14754, 22968) -- (axis cs: 51320, 62470);
  \draw[red, thick] (axis cs: 51320, 62470) -- (axis cs: 50782, 5772);
  \draw[red, thick] (axis cs: 50782, 5772) -- (axis cs: 14754, 22968);
 \end{axis}
\end{tikzpicture}

\end{center}
\caption{Two point sets of nine points each with three points on the convex hull. The point sets have different order types. Both point sets do not admit a 2-tolerant partition into two sets.}
\label{ch3}
\end{figure}


\begin{thebibliography}{10}

\bibitem{aak-eot-02}
O.~Aichholzer, F.~Aurenhammer, and H.~Krasser.
\newblock Enumerating order types for small point sets with applications.
\newblock {\em Order}, 19:265--281, 2002.

\bibitem{alfonsin01}
J.~L.~R. Alfons{\'{\i}}n.
\newblock Lawrence oriented matroids and a problem of mcmullen on projective
  equivalences of polytopes.
\newblock {\em Eur. J. Comb.}, 22(5):723--731, 2001.

\bibitem{tverberg50}
I.~B{\'a}r{\'a}ny, P.~V.~M. Blagojevic, and G.~M. Ziegler.
\newblock Tverberg's theorem at 50: extensions and counterexamples.
\newblock {\em Notices Amer. Math. Soc.}, 63(7):732--739.

\bibitem{barany2018tverberg}
I.~B{\'a}r{\'a}ny and P.~Sober{\'o}n.
\newblock Tverberg’s theorem is 50 years old: a survey.
\newblock {\em Bulletin of the American Mathematical Society}, 55(4):459--492,
  2018.

\bibitem{bh-arpt-20}
S.~Bereg and M.~Haghpanah.
\newblock Algorithms for {R}adon partitions with tolerance.
\newblock In {\em 6th Internat. Conf. on Algorithms and Discrete Applied
  Mathematics}, pages 476--487, 2020.

\bibitem{de2019discrete}
J.~De~Loera, X.~Goaoc, F.~Meunier, and N.~Mustafa.
\newblock The discrete yet ubiquitous theorems of carath{\'e}odory, helly,
  sperner, tucker, and tverberg.
\newblock {\em Bulletin of the American Mathematical Society}, 56(3):415--511,
  2019.

\bibitem{fls-10-01}
D.~Forge, M.~Las~Vergnas, and P.~Schuchert.
\newblock 10 points in dimension 4 not projectively equivalent to the vertices
  of a convex polytope.
\newblock {\em European Journal of Combinatorics}, 22(5):705--708, 2001.

\bibitem{garcia07}
N.~Garc{\'{\i}}a{-}Col{\'{\i}}n.
\newblock {\em Applying {T}verberg type theorems to geometric problems}.
\newblock PhD thesis, University College of London, 2007.

\bibitem{garcia15}
N.~Garc{\'i}a-Col{\'i}n and D.~G. Larman.
\newblock Projective equivalences of $k$-neighbourly polytopes.
\newblock {\em Graphs and Combinatorics}, 31(5):1403--1422, 2015.

\bibitem{garcia17}
N.~Garc{\'{\i}}a{-}Col{\'{\i}}n, M.~Raggi, and E.~Rold{\'{a}}n{-}Pensado.
\newblock A note on the tolerant {T}verberg theorem.
\newblock {\em Discrete {\&} Computational Geometry}, 58(3):746--754, 2017.

\bibitem{gp-ms-83}
J.~E. Goodman and R.~Pollack.
\newblock Multidimensional sorting.
\newblock {\em SIAM J. Comput.}, 12(3):484--507, Aug. 1983.

\bibitem{larman72}
D.~G. Larman.
\newblock On sets projectively equivalent to the vertices of a convex polytope.
\newblock {\em Bulletin of the London Mathematical Society}, 4(1):6--12, mar
  1972.

\bibitem{matouvsek2002lectures}
J.~Matou{\v{s}}ek.
\newblock {\em Lectures on discrete geometry}, volume 212.
\newblock Springer, 2002.

\bibitem{ms-attp-14}
W.~Mulzer and Y.~Stein.
\newblock Algorithms for tolerant {T}verberg partitions.
\newblock {\em Int. J. Comput. Geometry Appl.}, 24(4):261--274, 2014.

\bibitem{soberon15}
P.~Sober{\'o}n.
\newblock Equal coefficients and tolerance in coloured {T}verberg partitions.
\newblock {\em Combinatorica}, 35(2):235--252, 2015.

\bibitem{soberon2018robust}
P.~Sober{\'o}n.
\newblock Robust {T}verberg and colourful {C}arath{\'e}odory results via random
  choice.
\newblock {\em Combinatorics, Probability and Computing}, 27(3):427--440, 2018.

\bibitem{soberon12}
P.~Sober{\'{o}}n and R.~Strausz.
\newblock A generalisation of {T}verberg's theorem.
\newblock {\em Discrete {\&} Computational Geometry}, 47(3):455--460, 2012.

\bibitem{t-grt-66}
H.~Tverberg.
\newblock A generalization of {Radon}'s theorem.
\newblock {\em J. London Math. Soc.}, 41:123--128, 1966.

\end{thebibliography}
\end{document}